\definecolor{lgray}{gray}{.85}
\newtheorem{theorem}{Theorem}
\newtheorem{prop}[theorem]{Proposition}
\newtheorem{lemma}[theorem]{Lemma}
\newtheorem{cor}[theorem]{Corollary}
\begin{document}
\title{Graph Odometry}
\author{Aaron Dutle}
\address{Aaron Dutle \\ Department of Mathematics\\ University of South Carolina \\  Columbia, SC 29208}
\email{dutle@mailbox.sc.edu}
\author{Bill Kay} 
\address{Bill Kay \\ Department of Mathematics \& Computer Science \\  Emory University \\  Atlanta, GA 30329 }
\email{w.w.kay@emory.edu} 

\date{\today}

\begin{abstract}
We address problem of determining edge weights on a graph using non-backtracking closed walks from a vertex. We show that the weights of all of the edges can be determined from any starting vertex exactly when the graph has minimum degree at least three. We also determine the minimum number of walks required to reveal all edge weights.
\end{abstract}
\maketitle
\section{Introduction}
 
 Suppose that a delivery company sets up shop in a small town. Unfortunately, the odometer on its sole  delivery truck is broken. The truck uses exactly one gallon of fuel per mile, regardless of the trip. The delivery truck is also very large, so it cannot turn around on a street or even at an intersection, only at the company lot. At this lot,  it can also refuel, and hence determine the total distance for a trip. The company has a map of the town, and wants to determine the length of each street using information gathered from the delivery truck. What must the map of the town look like in order for this task to be possible? 
 
 This question is naturally framed in the language of edge-weighted graphs, which have been widely studied and are particularly well-suited to modelling real world phenomena. Models of  power grids, computer networks, and telephone networks might have edges that are weighted by the bandwidth, resistance, or the cost to connect nodes. The social network Facebook uses an algorithm to compute an edge weight of a posted item, and shows it to other users based on this score \cite{EdgeRank}. For businesses that use social media for advertising, determining or increasing the \emph{Edgerank} of a posted item on Facebook can be extremely valuable.  
 
 Framed in tha language of edge-weighted graphs, the scenario in the first paragraph asks the following:  If a  graph $G$ has weighted edges, and $v$ is some particular vertex of the graph, can each of the individual edge weights be determined by measuring walks starting and ending at $v,$ where backtracking is not allowed along the walk?    
 
\section{Preliminaries}

Throughout the paper, $G$ is assumed to be a finite, undirected graph. Let $V(G)$ denote the vertex set of $G$ and let $E(G)$ denote the edge set of $G$. Unless otherwise noted, we follow the notation of Diestel \cite{Diestel_2006}.

Define a \emph{walk} in $G$ to be a sequence  $W = \{v_j, v_{j+1}, \ldots , v_k\}\subseteq V(G)$ with $\{v_i, v_{i+1}\} \in E(G)$ for $j \leq i \leq k-1$. We call a walk a
\emph{non-backtracking walk} if we require $v_i \neq v_{i+2}$ for $j \leq i \leq k-2$. Finally, we call a (non-backtracking) walk \emph{closed} if $v_j = v_k$. 

For walks $W_1 = \{v_j, v_{j+1}, \ldots, v_k\}$, $W_2 = \{v_k,v_{k+1}, \ldots, v_\ell\}$ with $j <k<\ell$,   define the binary  operation $\circ$ as follows: $W_1 \circ W_2 := \{v_j,v_{j+1},  \ldots, v_k,v_{k+1}. \ldots, v_\ell\}$, i.e., concatenation of the walks. Define the unary operation $\overline{\cdot}$ as follows: $\overline{W_1} := \{\hat{v}_j = v_k, \hat{v}_2 = v_{k-1}, \ldots , \hat{v}_k = v_j\}$, i.e., reversal of the indices.

Now let $F: E(G) \rightarrow \Omega$ be a weight function of the edge set. We normally take $\Omega$ to be some real interval, although any field or $\mathbb{Z}$-module will do. For ease of notation, let $w_e :=F(e)$ for each $e \in E(G)$. For a closed walk $W= \{v_j, v_{j+1}, \ldots, v_k\}$,
call $F(W) := \sum_{i = j}^{k-1} F(\{v_i, v_{i+1}\})$ the {\em weight} of the walk $W$. Note that $F(\overline{W_1}) = F(W_1)$ and $F(W_1 \circ W_2) = F(W_1) + F(W_2)$.   

The reader can easily verify the following. 
\begin{prop}
Let $W_1$ and $W_2$ be as above. If $W_1$ is a non-backtracking walk, and $W_2$ is a non-backtracking walk, then $\overline{W_1}$ is a non-backtracking walk, and $W_1 \circ W_2$ is a non-backtracking walk
provided that $v_{k-1} \neq v_{k+1} $ \label{concat}
\end{prop}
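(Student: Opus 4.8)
The plan is to prove both assertions by a direct appeal to the definitions above; no machinery beyond what has already been set up is required, and indeed the work amounts to a careful case split on indices.

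For the reversal, write $W_1 = \{v_j, v_{j+1}, \ldots, v_k\}$ and let $\overline{W_1} = \{u_j, u_{j+1}, \ldots, u_k\}$ where $u_m = v_{k+j-m}$ (this is just the author's list $\hat v$ reindexed so the indices run over the same range $j,\ldots,k$). First one checks it is a walk: each pair $\{u_m, u_{m+1}\}$ equals $\{v_{s+1}, v_s\} = \{v_s, v_{s+1}\}$ for the appropriate $s$, hence lies in $E(G)$ because $W_1$ is a walk. For the non-backtracking condition, the triple $(u_m, u_{m+1}, u_{m+2})$ equals $(v_{s+2}, v_{s+1}, v_s)$ for some $s$ with $j \le s \le k-2$, so the requirement $u_m \neq u_{m+2}$ is literally the requirement $v_{s+2} \neq v_s$, which holds since $W_1$ is non-backtracking. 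Therefore $\overline{W_1}$ is a non-backtracking walk.

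For the concatenation, write $W_1 \circ W_2 = \{v_j, \ldots, v_k, v_{k+1}, \ldots, v_\ell\}$. It is a walk because every consecutive pair of vertices is either a consecutive pair coming from $W_1$ (indices in $\{j, \ldots, k-1\}$) or from $W_2$ (indices in $\{k, \ldots, \ell-1\}$), and both $W_1$ and $W_2$ are walks. For the non-backtracking condition one must verify $v_i \neq v_{i+2}$ for every $i$ with $j \le i \le \ell - 2$, and I would split this range into three pieces: for $j \le i \le k-2$ the inequality is part of the hypothesis that $W_1$ is non-backtracking; for $k \le i \le \ell - 2$ it is part of the hypothesis that $W_2$ is non-backtracking; and the sole remaining index, $i = k-1$, yields exactly the inequality $v_{k-1} \neq v_{k+1}$, which is the extra hypothesis of the proposition. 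Since these three cases cover all admissible $i$, the walk $W_1 \circ W_2$ is non-backtracking.

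There is no genuine obstacle here; the statement is a bookkeeping check, and the only point worth isolating is the boundary index $i = k-1$, which is the unique place where the two walks meet and is therefore not governed by the non-backtracking of either $W_1$ or $W_2$ on its own. This is precisely why the condition $v_{k-1} \neq v_{k+1}$ is the right hypothesis: it is exactly what is needed, and no more, to splice the two non-backtracking walks into one.
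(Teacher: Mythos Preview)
Your proof is correct and careful; the index bookkeeping is handled cleanly and the isolation of the boundary case $i=k-1$ is exactly the point of the proposition. The paper itself gives no proof at all, simply stating that ``the reader can easily verify the following,'' so your argument is a complete write-up of what the authors chose to omit.
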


Let $\mathcal{W}$ be a collection of closed non-backtracking walks in $G$. We say that an edge $e\in E(G)$ is \emph{revealed} by $\mathcal{W}$ if there exist $W_1, W_2, \ldots, W_\ell \in \mathcal{W}$ and non-zero integers $c_e, c_1, c_2, \ldots, c_\ell$ so that  $\sum_{i=1}^\ell c_i F(W_i) = c_ew_e.$ In an analogous way, we say that a walk $W$ is revealed by $\mathcal{W}$ when  there exist $W_1, W_2, \ldots, W_\ell \in \mathcal{W}$ and non-zero integers $c_W, c_1, c_2, \ldots, c_\ell$ so that  $\sum_{i=1}^\ell c_i F(W_i) = c_WF(W).$

For  a vertex $v \in V(G)$, let $\mathcal{W}_v$ denote the set of all closed non-backtracking walks in $G$ starting and ending at $v$. For a subset of vertices $S\subseteq V(G)$, we let $\mathcal{W}_S = \bigcup_{v\in S} \mathcal{W}_v.$ For an edge $e\in E(G)$, we say $e$ is \emph{revealed by  $S$} if $e$ is revealed by $\mathcal{W}_S.$  Define the {\em odometric set} of $S$, denoted $O_S,$ as the set of all edges revealed by $S.$ If $O_S = E(G)$, we say that $G$ is \emph{odometric at $S$}. In the case that $S = \{v\}$ for some vertex $v$, we drop the set notation and say $G$ is odometric at $v.$ Finally, we say that the graph $G$ is \emph{odometric} if it is odometric at $v$ for every vertex  $v\in V(G)$. The main result of this paper is a complete characterization of odometric graphs.

Before moving to the characterization, we note a few subtleties in the above definitions. First, we note that any particular walk  $W\in\mathcal{W}_S$ must have \emph{one} vertex $v\in S$ for the starting and ending vertex. However, we note that there could be walks $W_1, W_2 \in \mathcal{W}_S$ which have different starting and ending vertices. Hence, in revealing some edge, one \emph{is} able to use $W_1$ and $W_2$ despite their starting and ending vertices being different. For example, an edge could be revealed by using three closed walks from $v$, and two closed walks from $w,$ provided $v,w\in S.$

Next, we address the question of using previously revealed edges in revealing later edges. An illustration is perhaps the simplest way to demonstrate. Suppose that $u,v,w$ is a triangle in our graph, and we're trying to discover the odometric set for $v$. Suppose also that we can reveal the edge $e = \{v,u\}$ and the edge $f=\{u,w\}$. One would like to say that we can then reveal $g = \{w,v\}$ by getting the weight of the triangle $W = {v,u,w,v}$ as a closed walk, and then subtracting off the weights of $e$ and $f$ determine the weight of $g$. 

While the definition of revealing an edge says nothing about using the weights of previously revealed edges, our next proposition shows that using such information does not increase the size of an odometric set. 

\begin{prop}\label{prop:direct} 
Let $e$ be an edge of a graph $G,$ and $S\subseteq V(G).$  Suppose that there exist closed walks $W_1, \ldots, W_\ell\in \mathcal{W}_S,$ edges $e_1, \ldots e_m \in \mathcal{O}_{S},$ and nonzero integers $a, b_1, \ldots b_\ell, c_1, \ldots c_m$ such that 
$$aw_e = \sum_{i=1}^\ell b_iF(W_i) + \sum_{j=1}^m c_j w_{e_j}.$$ 
Then $e\in \mathcal{O}_S$. 
\end{prop}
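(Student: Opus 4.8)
The plan is to induct on $m$, the number of previously revealed edges appearing in the given relation: the case $m=0$ is essentially the definition of ``revealed,'' and each inductive step removes one previously revealed edge by substituting a reveal-relation for it.

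Before starting I would fix the convention that the edge weights are treated as formally independent — equivalently, work in the free $\mathbb{Z}$-module on $E(G)$ with $w_e$ the basis element of $e$, so that $F(W)$ is the integer combination $\sum_{e} n_e(W)\, w_e$, where $n_e(W)$ is the number of traversals of $e$ by $W$; any reveal-relation obtained in this universal setting specializes to an arbitrary $\Omega$ and $F$. In particular $w_e \neq 0$ for every $e$. With this in hand the base case $m=0$ reads $a w_e = \sum_{i=1}^\ell b_i F(W_i)$ with $a$ and all $b_i$ nonzero: if $\ell \geq 1$ this is literally the statement that $e$ is revealed by $\mathcal{W}_S$, and if $\ell = 0$ the relation would force $w_e = 0$, which cannot happen, so that subcase is vacuous.

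For the inductive step I would assume the claim for fewer than $m$ previously revealed edges and take the given relation $a w_e = \sum_{i=1}^\ell b_i F(W_i) + \sum_{j=1}^m c_j w_{e_j}$. Since $e_m \in \mathcal{O}_S$, there are walks $W_1',\dots,W_p' \in \mathcal{W}_S$ and nonzero integers $d, d_1,\dots,d_p$ with $d\, w_{e_m} = \sum_{k=1}^p d_k F(W_k')$. Multiply the given relation by $d$ and replace $c_m d\, w_{e_m}$ by $c_m \sum_k d_k F(W_k')$, obtaining
\[ (ad)\, w_e = \sum_{i} (b_i d)\, F(W_i) + \sum_{k} (c_m d_k)\, F(W_k') + \sum_{j=1}^{m-1} (c_j d)\, w_{e_j}. \]
Merging the two walk sums — adding coefficients of any repeated walk and discarding walks whose coefficient becomes $0$ — leaves a relation of exactly the proposition's form using only the revealed edges $e_1,\dots,e_{m-1}\in\mathcal{O}_S$, with leading coefficient $ad\neq 0$, all surviving walk coefficients nonzero, and all $c_j d\neq 0$. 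The induction hypothesis then yields $e \in \mathcal{O}_S$.

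There is no deep obstacle; the point requiring care is the bookkeeping that keeps the relation in the required ``nonzero coefficients'' shape after substitution. Multiplying by the nonzero integer $d$ is harmless, but merging walk terms can annihilate all of them, so the inductive statement must permit zero walks, and the base case must then dispose of the walk-free possibility — which is precisely where $w_e \neq 0$ is used. A slicker alternative worth mentioning is to work over $\mathbb{Q}$: letting $U$ be the $\mathbb{Q}$-span of $\{F(W) : W \in \mathcal{W}_S\}$ inside $\mathbb{Q}^{E(G)}$, one checks $e \in \mathcal{O}_S \iff w_e \in U$; the hypothesis then says $a w_e \in U$, whence $w_e \in U$ immediately.
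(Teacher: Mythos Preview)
Your main argument is correct and is essentially the paper's approach, just organized differently: both substitute the reveal relation for each $e_j$ into the given expression to obtain a pure walk combination for $w_e$. The paper does it in one stroke, multiplying through by $\alpha = \prod_j \alpha_j$ to clear all the $e_j$ simultaneously, while you peel off one $e_j$ at a time by induction on $m$. Your version is actually more careful on the point you flag---after merging walk terms, coefficients may cancel to zero---which the paper's proof simply glosses over; your free-module convention is also the right way to make the ``revealed'' relation weight-independent, something the paper leaves implicit but relies on throughout.

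Your closing $\mathbb{Q}$-span remark is a genuinely different and cleaner route the paper does not take: once one checks that $e \in \mathcal{O}_S$ is equivalent to $w_e$ lying in the $\mathbb{Q}$-span $U \subseteq \mathbb{Q}^{E(G)}$ of the walk vectors, the proposition is a one-line closure statement with no bookkeeping. This linear-algebraic viewpoint is in fact the one the paper itself adopts later for Theorem~\ref{thm:minimal}, so it would sit naturally here as well.
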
 

\begin{proof}
The proof is simple, although notationally cumbersome. By assumption,  for $1\leq j \leq m$, we have $e_j \in \mathcal{O}_S$.  Hence for each of these edges, there exists a collection of closed walks $\{W_1^j,\ldots, W_{\ell_j}^j\} \in \mathcal{W}_S$ and non-zero integers $\alpha_j, \beta_1^j, \beta_2^j, \ldots, \beta_{\ell_j}^j$ so that  $\alpha_j w_{e_j} = \sum_{k=1}^{r_j} \beta_k^j F({W}_k^j).$

Set $\alpha = \prod_{j=1}^m\alpha_j$. Then

\begin{align*} (\alpha a)w_e & = \alpha\left(\sum_{i=1}^\ell b_iF(W_i) + \sum_{j=1}^m c_j w_{e_j}\right) \\
& = \sum_{i=1}^\ell (\alpha b_i)F(W_i) +\sum_{j=1}^m (\frac{\alpha c_j}{\alpha_j}) \alpha_jw_{e_j} \\ 
& = \sum_{i=1}^\ell (\alpha b_i)F(W_i) + \sum_{j=1}^m \sum_{k=1}^{r_j} \left(\frac{\alpha c_j\beta_k^j}{\alpha_j}\right) F({W}_k^j),\end{align*}
which is an expression showing that $e\in \mathcal{O}_S.$ 
\end{proof}

\section{Main Theorem}
There are a few obvious obstructions to a graph being odometric. If $G$ has a vertex of degree $1,$ then it is not odometric. To see this, let $e$ be the leaf edge, and let $v$ be any vertex of the graph other than the given vertex of degree 1. Then $O_v$ cannot contain $e$, as no closed walk from $v$ can even traverse $e$. Similarly, there can be no vertex of degree $2$. In such a graph, if we let $v$ be any vertex other  than the given vertex of degree 2, then the two edges incident to the degree 2 vertex cannot be in $O_v$, since any non-backtracking walk through one of these edges must necessarily traverse the other edge, and so the  weights of the individual edges cannot be separated. We claim that these necessary conditions also suffice. That is, we claim the following theorem:

\begin{theorem}[Main] \label{main}
A (finite) connected, weighted graph $G$ is odometric if and only if the minimum degree of $G$ is 3.
\end{theorem}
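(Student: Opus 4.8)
The plan is to prove the hard direction: if $G$ is a finite connected graph with minimum degree $3$, then $G$ is odometric at every vertex $v$. Fix such a $v$. The strategy is to reveal every edge by exhibiting, for each $e \in E(G)$, an explicit integer combination of weights of closed non-backtracking walks based at $v$ that isolates $w_e$; by Proposition~\ref{prop:direct} I am free to use previously revealed edges along the way, so I can argue inductively. First I would set up the basic local move: since every vertex has degree at least $3$, at any vertex $u$ there are at least three edges, so given a closed non-backtracking walk that arrives at $u$ along one edge, there are always (at least) two distinct choices to leave, and one can splice in a short detour $u \to x \to u$ — no, that backtracks — so instead the key gadget is a ``theta'' or small cycle detour using the third edge at $u$ to avoid backtracking. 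The first lemma I would prove is that from $v$ one can reveal the weight $F(W)$ of \emph{every} closed non-backtracking walk $W$ based at $v$ (this is essentially the definition applied to $\mathcal{W}_v$ itself, taking $\ell=1$), and more usefully, that one can reveal the weight of many \emph{paths} by taking a path out to some edge and a different non-backtracking path back.

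Concretely, here is the inductive scaffold I would use. Build a spanning tree $T$ of $G$ rooted at $v$ via BFS/DFS. I would first handle the ``non-tree'' edges and then the tree edges, or perhaps interleave: the real content is a single lemma of the form \emph{if $P$ is a non-backtracking path from $v$ to a vertex $u$ and all edges of $P$ are already revealed, and $e=\{u,x\}$ is an edge with $x \notin P$ or at least such that appending $e$ keeps things non-backtracking, then $e$ can be revealed}, provided we can ``close up'' the walk $P \circ e \circ Q$ for some non-backtracking return path $Q$ whose edges are all already revealed. Using minimum degree $3$, I would argue such a return path $Q$ always exists: from $x$ we have at least two edges other than $e$, so we can start back toward $v$ without backtracking, and minimum degree $3$ at every subsequent vertex lets us keep avoiding the forbidden previous vertex; connectivity guarantees we can actually steer back to $v$ (here one may need to occasionally take a detour through a triangle or a short cycle to correct the ``non-backtracking at the splice'' condition of Proposition~\ref{concat}). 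Then $w_e = F(P \circ e \circ Q) - \sum_{f \in P} w_f - \sum_{f \in Q} w_f$, and Proposition~\ref{prop:direct} finishes. Starting the induction: reveal the edges incident to $v$ first — for an edge $e=\{v,u\}$, take $e$ out, then a non-backtracking path from $u$ back to $v$ not using $e$ (exists since $\deg u \ge 3$ and $G-e$ is connected... this needs care, but degree $3$ helps), giving a closed walk; but one closed walk only reveals $w_e$ if no other unknown edge is on it, so for the base case I would instead reveal a whole cycle through $v$ and peel, or use two closed walks sharing all edges but $e$.

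The step I expect to be the main obstacle is the non-backtracking bookkeeping at the splice points: Proposition~\ref{concat} requires $v_{k-1}\neq v_{k+1}$ when concatenating, so naively gluing an outgoing path to a return path can fail, and worse, the return path must avoid immediately retracing the last edge used. Handling this cleanly is exactly where $\delta(G)\ge 3$ (rather than $\ge 2$) is essential: at every vertex there is always a ``third direction'' to escape into, so one can always perturb a walk to be non-backtracking — but converting this intuition into a clean argument that simultaneously (a) keeps walks non-backtracking, (b) returns to $v$, and (c) only uses already-revealed edges outside the one new edge $e$, is the crux. I would isolate this as a standalone combinatorial lemma (``for any edge $e$ and any vertex $v$, there is a closed non-backtracking walk at $v$ using $e$ exactly once and otherwise using only edges of a prescribed already-revealed set, once that set is large enough'') and prove it by induction on distance from $v$, using the minimum-degree-$3$ escape at each step and one ``triangle or square detour'' trick to fix parity/backtracking glitches at the return splice. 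The converse direction is already sketched in the excerpt (degrees $1$ and $2$ obstruct), so I would just record it formally.
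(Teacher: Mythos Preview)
Your approach diverges from the paper's, and the step you flag as ``the crux'' is a genuine gap, not just bookkeeping.

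The circularity is real. Your inductive lemma requires, to reveal a new edge $e=\{u,x\}$, a non-backtracking return path $Q$ from $x$ to $v$ using only \emph{already-revealed} edges. But consider a bridge $b$ with $v$ on one side. Every edge on the far side of $b$ needs a closed walk from $v$ that crosses $b$ (necessarily an even number of times) and, between crossings, uses far-side edges; none of those far-side edges can be revealed before you have some way across $b$, and your scheme cannot reveal $b$ without already-revealed far-side edges to loop through. Your proposed standalone lemma --- a closed non-backtracking walk at $v$ using $e$ exactly once and otherwise only revealed edges --- is false outright when $e$ is a bridge, since any closed walk uses a bridge an even number of times. You sensed trouble (``$G-e$ is connected\dots this needs care''), but minimum degree $3$ does \emph{not} give 2-edge-connectivity, so the care is essential and your scaffold does not supply it.

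The paper breaks the circularity with a device you nearly touch in your base case (``two closed walks sharing all edges but $e$'') but never develop: one can reveal the weight of an entire \emph{path} $W$ from $v$ to a target vertex $u$ --- without knowing any of its individual edges --- via
\[
2F(W)\;=\;2F(W\circ C\circ \overline{W})\;-\;F(W\circ C\circ C\circ \overline{W}),
\]
where $C$ is any closed non-backtracking walk at $u$ whose first and last edges are distinct from each other and from the last edge of $W$. The existence of such a $C$ at the vertices that matter is exactly what $\delta(G)\ge 3$ together with the block structure guarantees. Once path weights are directly revealable, the return leg no longer needs to be built from previously revealed edges, and the induction unsticks. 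The paper then organizes everything around the block tree rather than a spanning tree: reveal weights of paths to cut vertices of 2-connected blocks (Lemma~\ref{lemma:cutpathmeasuring}), transfer odometric sets through those cut vertices (Corollary~\ref{cor:2cutgas}), handle bridges separately (Corollary~\ref{cor:bridgeweight}), and sweep each 2-connected block from inside via a neighbor-transfer lemma (Lemmas~\ref{lemma:neighbor} and~\ref{lemma:blocks}). Your spanning-tree induction on individual edges has no access to this path-doubling move, and without it the bridge case does not go through.
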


The essential structure of the proof is as follows. For any vertex $v$, any non-backtracking walk from $v$ to any cut vertex of a 2-connected block of $G$ can be revealed from $v$ (Lemma \ref{lemma:cutpathmeasuring}). This leads to the fact that the odometric set for $v$ includes the odometric set for any cut vertex of a 2-connected block (Corollary \ref{cor:2cutgas}). Lemma \ref{lemma:cutpathmeasuring} is then extended to reveal any path to any cut vertex (Corollary \ref{lemma:bridgepathmeasuring}),  which gives that the weight of any bridge edge can be revealed (Corollary \ref{cor:bridgeweight}). Finally,  all of the edges of any 2-connected block can be revealed by any vertex in the block (Lemma \ref{lemma:blocks}).

\begin{lemma}\label{lemma:cutpathmeasuring}
Let $G$ be a connected graph with minimum degree 3, let $v\in V(G),$ $B$ be a maximal 2-connected block of $G,$ and $u$ be a cut vertex of $B$. Then any non-backtracking walk $W$ from $v$ to $u$ can be revealed by $v$.

\end{lemma}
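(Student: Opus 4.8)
The goal is to show that any non-backtracking walk $W$ from $v$ to a cut vertex $u$ of a $2$-connected block $B$ can be revealed by $v$, i.e. that $F(W)$ is an integer combination of weights of closed non-backtracking walks based at $v$. The natural strategy is to produce such a closed walk by \emph{appending a return trip}: if we can find a non-backtracking walk $W'$ from $u$ back to $v$ so that $W \circ W'$ is again non-backtracking (Proposition \ref{concat} requires only that the last edge of $W$ differ from the first edge of $W'$), then $W \circ W' \in \mathcal{W}_v$ and $F(W \circ W') = F(W) + F(W')$. If in addition $F(W')$ is itself revealed by $v$, then $F(W) = F(W\circ W') - F(W')$ expresses $F(W)$ in the required form and we are done by Proposition \ref{prop:direct}. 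So the whole problem reduces to: (i) find a return walk $W'$ from $u$ to $v$ compatible with $W$ at the splice point $u$, and (ii) arrange that $F(W')$ is revealed by $v$.

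For step (ii) the cleanest choice is to take $W' = \overline{W''}$ for a non-backtracking walk $W''$ from $v$ to $u$, since then $F(W') = F(W'')$ and it suffices to reveal one such $v$--$u$ walk. And here the $2$-connectedness of $B$ together with minimum degree $3$ should give room to maneuver: at the cut vertex $u$, which lies in $B$, there are at least two edges of $B$ incident to $u$, and more generally $u$ has degree at least $3$ in $G$. Concretely, I expect to do the following. Pick a non-backtracking walk $P$ from $v$ to $u$ (one exists since $G$ is connected; if $v = u$ the statement is about closed walks and is easier). Using that $u$ has degree $\geq 3$, pick two distinct edges $e_1 = \{u, x_1\}$ and $e_2 = \{u, x_2\}$ at $u$, both different from the last edge of $W$ and chosen so that at least one of them differs from the last edge of $P$. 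Then $W$ followed by a short detour out along $e_1$, back in along $e_2$ (legal by non-backtracking since $x_1 \neq x_2$, and legal at the two splices by the edge-distinctness choices), and then $\overline{P}$ back to $v$ — or a similar construction — yields a closed non-backtracking walk at $v$. The point of the extra degree-$3$ room at $u$ is exactly to absorb the constraint that the edge we leave $u$ on must differ both from how $W$ arrived and from how the return walk departs.

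The step I expect to be the main obstacle is handling the splice at $u$ cleanly in all cases, i.e. the non-backtracking bookkeeping when $W$, the detour, and the return walk all meet at $u$, and in particular the degenerate cases: $v = u$; the last edge of $W$ coinciding with the first edge of the return walk; or $W$ already ending in a way that forces a particular edge at $u$. The fix is to exploit $\deg_G(u) \geq 3$: with three edges available at $u$ there is always a choice of departure edge avoiding the (at most two) forbidden edges, so a legal splice always exists, possibly after inserting a one- or two-edge detour near $u$ whose contribution to $F$ is separately revealed (it is a closed walk or part of one based at $u$, and — via the first, simpler constructions — reveals back through $v$). One must also be slightly careful that $W$ itself is nontrivial (length $\geq 1$); if $W$ is the empty walk at $v = u$ there is nothing to prove. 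I would organize the write-up by first disposing of the $v = u$ case, then building the return walk as $\overline{P}$ possibly modified by a short detour at $u$, then invoking Propositions \ref{concat} and \ref{prop:direct} to conclude.
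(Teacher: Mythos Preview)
Your strategy reduces revealing $F(W)$ to revealing $F(W')$ for some return walk $W' = \overline{P}$ from $u$ to $v$, and you write that ``it suffices to reveal one such $v$--$u$ walk.'' But revealing $F(P)$ for a $v$-to-$u$ walk $P$ is precisely the content of the lemma you are proving, so the argument is circular: you never establish a base case. Your proposed fix --- insert a short detour at $u$ whose weight is ``separately revealed \ldots\ via the first, simpler constructions'' --- has the same defect, since no such simpler construction has been given. The obstacle you flag (splice bookkeeping at $u$) is not the real difficulty; the real difficulty is eliminating this circular dependence on $F(W')$.

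The paper breaks the circularity with a self-cancellation trick you are missing: it takes $\overline{W}$ itself as the return walk, together with a closed non-backtracking walk $C$ at $u$ whose first and last edges are distinct and both different from the last edge of $W$. Then $W \circ C \circ \overline{W}$ and $W \circ C \circ C \circ \overline{W}$ are both in $\mathcal{W}_v$, and
\[
2F(W) \;=\; 2\,F(W \circ C \circ \overline{W}) \;-\; F(W \circ C \circ C \circ \overline{W})
\]
reveals $F(W)$ directly, with nothing auxiliary to reveal first. Producing such a $C$ is exactly where the hypotheses enter: if the last edge of $W$ lies outside $B$, the $2$-connectedness of $B$ supplies a cycle $C$ through $u$ inside $B$; if it lies inside $B$, one detours through the block tree to another $2$-connected block and reduces to the first case. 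Your sketch uses only $\deg(u) \geq 3$ and never invokes $2$-connectedness or the cut-vertex hypothesis --- a symptom that something is missing. Indeed, your ``out along $e_1$, back in along $e_2$'' detour tacitly assumes a path from $x_1$ to $x_2$ avoiding $u$, which is just what $2$-connectedness of $B$ is needed for.
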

\begin{proof} We note that the statement is are vacuous if there are no cut vertices of $B,$ or if $v=u$. 

Let $W = \{ v=v_1, v_2, \ldots, v_k = u\}$. We consider two possibilities. 

\emph{Case 1:} The edge $\{v_{k-1}, v_k\}$ is not contained in $B.$

Because $B$ is 2-connected, $u$ must have two neighbors $x,y$ inside $B$. By Menger's Theorem\footnote{In fact, at both points where we cite Menger's Theorem,  we could use a much weaker statement. We  need only the fact that for any two vertices in a 2-connected graph, there is a cycle containing both of them. The proof is a simple exercise for any reader who desires a self-contained treatment. },
$x$ and $y$ are either adjacent, or there are two internally disjoint paths inside $B$ connecting $x$ to $y$. In either case, there is a path $P = \{x=v_1, \ldots v_\ell = y\}$ (possibly a single edge) from $x$ to $y$ that does not use the vertex $u$. Hence $C = \{u,x\}\circ P \circ \{y,u\}$ is a non-backtracking closed walk from $u$. Also, because $C$ doesn't use the same edge to start and end the closed walk, $C\circ C$ is also a non-backtracking closed walk from $u$ by Proposition \ref{concat}. 

Since $\{v_{k-1}, v_k\}$ is not in $B$, we have that both $W\circ C\circ \overline{W}$ and $W\circ C\circ C\circ \overline{W}$ are non-backtracking closed walks from $v$ by Proposition \ref{concat}. We note that $$ 2F(W) = 2F(W\circ C\circ \overline{W}) - F(W\circ C\circ C\circ \overline{W}),$$ which proves the statement for this case. We also note for use in the next case that $v$ reveals $C$ as well, since $F(C) = F(W\circ C\circ C\circ \overline{W})-F(W\circ C\circ \overline{W})$.

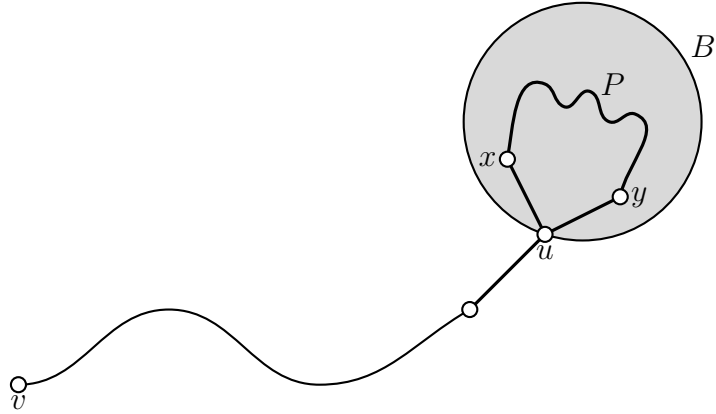
\begin{figure}\label{fig:cutmeasure1}
\begin{tikzpicture}

\draw[thick, fill=lgray] (7.5,4.5) circle [radius = sqrt(1.5^2+.5^2)];
\draw[very thick] (7,3) -- (6,2);
\draw[very thick] (7,3) -- (6.5,4);
\draw[very thick] (7,3) -- (8,3.5);
\draw[very thick](6.5,4) to [in=157.5, out=80] (7,5) to[out=337.5, in=157.5] (7.25,4.7) to[out=337.5, in=157.5] (7.6,4.9) to[out=337.5, in=157.5] (7.85,4.5) to[out=337.5, in=157.5] (8.2,4.6) to [out=337.5, in=80] (8,3.5);
\draw[thick] (6,2) to[out=210, in=0] (4,1) to [out=180,in=0] (2,2) to [out=180, in=0](0,1);
\draw[thick, fill=white] (0,1) circle [radius = .1];
\draw[thick, fill=white] (7,3) circle [radius = .1];
\draw[thick, fill=white] (6,2) circle [radius = .1];
\draw[thick, fill=white] (6.5,4) circle [radius = .1];
\draw[thick, fill=white] (8,3.5) circle [radius = .1];
\node [below] at (0,1) {$v$};
\node [below] at (7,3) {$u$};
\node [left] at (6.5,4){$x$};
\node [right] at(8,3.5){$y$};
\node at (9.1, 5.5) {$B$};
\node at (7.9,5){$P$};
\end{tikzpicture}
\caption{The first case of Lemma \ref{lemma:cutpathmeasuring}}
\end{figure}

\emph{Case 2:} The edge $\{v_{k-1}, v_k\}$ is contained in $B.$

Consider the structure of $G$. Recall that in the block graph $G$, there are vertices for each block of $G$ (isolated vertex, bridge, or maximal 2-connected component), and vertices for each cut vertex. Two vertices in the block graph are adjacent when one corresponds to a block, the other to a cut vertex, and the cut vertex is contained in the block. Recall also that the block graph of a connected graph is always a tree, whose leaf vertices are bridges or 2-connected blocks \cite{Diestel_2006}.

In our case, the leaf vertices in the block graph cannot be bridges, as these would be vertices with degree 1, contradicting the hypotheses. Hence all leaves correspond to 2-connected components. In the block graph, if we remove the edge connecting $B$ to $u,$ the component containing $u$ is still a (nontrivial) tree. Hence there is a path in this tree from $u$ to a leaf, which corresponds to a different 2-connected block $B'.$ The neighbor of this leaf corresponds to a cut vertex $u',$ (where it is possible that $u=u'$.) Thus in the block graph, we have a path from the vertex corresponding to $u$ to a vertex corresponding to a cut vertex $u'$ of a 2-connected block $B' \neq B.$ Also, this path avoids our original block $B.$    

For each block vertex traversed by this path, we can find a non-backtracking path through the block in our original graph $G.$ Hence in $G,$ there is a (possibly trivial) non-backtracking path $W'$ from $u$ to a cut vertex $u'$ of $B'.$  As the path in the block graph avoided $B$, we see that $W'$ has no edges inside $B.$ 

Note that $W\circ W'$ is then a non-backtracking walk from $v$ to a cut-vertex $u'$ of a maximal 2-connected component $B'$ of $G$, and the last edge of this walk is not inside the block. The proof of Case 1 provides a cycle $C'$ in $B'$ containing $u',$ and reveals $W\circ W'$  and  $C'$. 

Next, we note that $W\circ W' \circ C' \circ \overline{W'}$ satisfies the conditions of Case 1 for our original cut vertex $u$ and block $B.$ Hence we can determine $F(W\circ W' \circ C' \circ \overline{W'}).$ From this, we can deduce $F(W')$ by  
$$F(W') = F(W\circ W' \circ C' \circ \overline{W'}) - F(W\circ W')-F(C').$$

Finally, we see that $F(W) = F(W\circ W')- F(W'),$ proving the statement in the second case. 
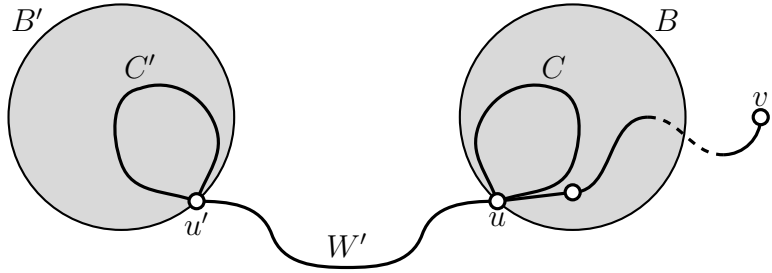
\begin{figure}\label{fig:cutmeasure2}
\begin{tikzpicture}

\draw[very thick] (0,0) to [out=0, in=250] (1,.44) to[out=70, in=180] (2,.882);
\draw[thick, fill=lgray] (3,2) circle [radius=1.5];
\draw[very thick] (2,.882) to[out=110, in=250] (1.75,1.882) to[out=70, in=160](2.75,.882+1.5) to[out=350, in=70] (3,.882+.5) to[out=250, in=20] (2,.882);
\draw[ very thick] (3,1) -- (2,.882);
\draw[very thick] (3,1) to [out=0, in=180] (4,2);
\draw[ dashed, very thick] (4,2) to [out=0, in=180](5,1.5);
\draw[ very thick] (5,1.5) to[ out=0, in=270] (5.5,2); 
\draw[very thick, fill=white] (5.5,2) circle [radius=.1];
\draw[very thick, fill=white] (3,1) circle [radius=.1];
\draw[very thick, fill=white]  (2,.882) circle [radius=.1];
\node [below] at (2,.882) {$u$};
\node [above] at (2.75,.882+1.5) {$C$};
\node [above] at (5.5,2) {$v$};
\node [above] at (4.25, 3) {$B$};

\draw[very thick] (0,0) to [out=180, in=180-250] (-1,.44) to[out=180-70, in=0] (-2,.882);
\draw[thick, fill=lgray] (-3,2) circle [radius=1.5];
\draw[very thick] (-2,.882) to[out=70, in=180-250] (-1.75,1.882) to[out=180-70, in=180-160](-2.75,.882+1.5) to[out=180-350, in=180-70] (-3,.882+.5) to[out=180-250, in=180-20] (-2,.882);
\draw[very thick, fill=white]  (-2,.882) circle [radius=.1];
\node [below] at (-2,.882) {$u'$};
\node [above] at (-2.75,.882+1.5) {$C'$};
\node [above] at (-4.25, 3) {$B'$};
\node [above] at (0,0) {$W'$};
\end{tikzpicture}
\caption{The second case of Lemma \ref{lemma:cutpathmeasuring}}
\end{figure}
\end{proof} 

We can use this Lemma and its proof technique to deduce an important corollary. 

\begin{cor} \label{cor:2cutgas}
Let $G$ be a connected graph with minimum degree 3, $v\in V(G)$, and $u$ be a cut vertex for a $2$ connected block of $G$. Then $O_u \subseteq O_{v}$.  
\end{cor}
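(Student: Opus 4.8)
The plan is to deduce the corollary from Lemma~\ref{lemma:cutpathmeasuring} together with the composition trick already used in the proof of Proposition~\ref{prop:direct}. The reduction I would make first is: \emph{it suffices to prove that $v$ reveals every closed non-backtracking walk $Z\in\mathcal{W}_u$.} Indeed, if this holds and $e\in O_u$, then there are $Z_1,\dots,Z_m\in\mathcal{W}_u$ and nonzero integers $c_e,c_1,\dots,c_m$ with $\sum_i c_iF(Z_i)=c_ew_e$; substituting into each $F(Z_i)$ the integer combination of weights of walks in $\mathcal{W}_v$ that reveals it, and clearing denominators exactly as in the proof of Proposition~\ref{prop:direct}, exhibits $e\in O_v$. (If $v=u$ there is nothing to prove, so assume $v\neq u$.)

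Now fix a nontrivial $Z=\{u=y_0,y_1,\dots,y_{r-1},y_r=u\}\in\mathcal{W}_u$, and set $b=y_1$, $c=y_{r-1}$, the first and last neighbors of $u$ along $Z$ (possibly $b=c$). Since $B$ is $2$-connected, $u$ has at least two neighbors inside $B$, so I can choose neighbors $n_1,n_2$ of $u$ with $\{n_1,u\},\{n_2,u\}\in E(B)$, $n_1\neq b$, and $n_2\neq c$. The heart of the argument is to produce non-backtracking walks $W_1,W_2$ from $v$ to $u$ whose last edges are $\{n_1,u\}$ and $\{n_2,u\}$ respectively. Granting this, $W_1\circ Z\circ\overline{W_2}$ is a closed walk from $v$, and it is non-backtracking by Proposition~\ref{concat}: the seam at the end of $W_1$ is fine because the vertex before $u$ in $W_1$ is $n_1\neq b$, and the seam at the end of $Z$ is fine because the vertex before $u$ in $Z$ is $c\neq n_2$. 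Since $F(W_1\circ Z\circ\overline{W_2})=F(W_1)+F(Z)+F(W_2)$, we get
$$F(Z)=F\bigl(W_1\circ Z\circ\overline{W_2}\bigr)-F(W_1)-F(W_2).$$
The first term on the right is the weight of a walk in $\mathcal{W}_v$, hence trivially revealed by $v$; and $F(W_1),F(W_2)$ are revealed by $v$ by Lemma~\ref{lemma:cutpathmeasuring}, since $W_1$ and $W_2$ are non-backtracking walks from $v$ to $u$. Combining these three facts as in Proposition~\ref{prop:direct} shows $Z$ is revealed by $v$.

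It remains to construct, for any neighbor $n$ of $u$ with $\{n,u\}\in E(B)$, a non-backtracking walk from $v$ to $u$ with last edge $\{n,u\}$. Let $K$ be the component of $G\setminus u$ that contains the (connected) graph $B\setminus u$. If $v\in V(K)$ — in particular whenever $v\in V(B)$ — I would take a path from $v$ to $n$ lying in $K$ and append the edge $\{n,u\}$; this walk meets $u$ only at its last vertex, so it is non-backtracking. If $v\notin V(K)$, then every path from $v$ to $u$ arrives at $u$ from a vertex outside $V(B)$; choosing such a path $P$, a neighbor $n'$ of $u$ with $\{n',u\}\in E(B)$ and $n'\neq n$ (available since $B$ is $2$-connected), and a path $Q$ from $n'$ to $n$ inside $B\setminus u$, the walk $P\circ\{u,n'\}\circ Q\circ\{n,u\}$ does the job — one checks its three seams are non-backtracking using that the penultimate vertex of $P$ is not in $V(B)$ and that $n'\neq n$.

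I expect the last paragraph to be where the real work lies: every concatenation must be checked against the non-backtracking condition of Proposition~\ref{concat}, and one must be careful that the short walks inserted to repair seams (the hops into and out of $B$) do not themselves backtrack — this is exactly where the $2$-connectedness of $B$ is used, and where the choice of $n_1,n_2$ as neighbors \emph{inside} $B$ matters. By contrast, the reduction in the first paragraph and the assembly of the final integer-combination certificate are bookkeeping that runs in parallel to the proof of Proposition~\ref{prop:direct}.
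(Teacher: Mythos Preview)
Your argument is correct. Both your proof and the paper's begin with the same reduction --- it suffices to reveal an arbitrary $Z\in\mathcal{W}_u$ from $v$ --- and both finish by sandwiching $Z$ between non-backtracking walks from $v$ to $u$ whose weights are revealed via Lemma~\ref{lemma:cutpathmeasuring}. The implementations differ, though. The paper first decomposes $Z$ at each passage through $u$ into sub-walks whose first and last edges lie on the same side of the cut (both in $B$ or both outside $B$), and then uses two \emph{fixed} walks $P,Q$ from $v$ to $u$ --- one ending in $B$, one ending outside $B$, built by reusing the cycles $C,C'$ from the proof of Lemma~\ref{lemma:cutpathmeasuring} --- to sandwich each sub-walk symmetrically as $P\circ W\circ\overline{P}$ or $Q\circ W\circ\overline{Q}$. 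You avoid the decomposition entirely by allowing an asymmetric sandwich $W_1\circ Z\circ\overline{W_2}$, with $W_1,W_2$ tailored to $Z$: their last edges are neighbors of $u$ inside $B$ chosen specifically to dodge $Z$'s first and last neighbors $b,c$. Your construction of these walks is also more elementary, relying only on the connectedness of $B\setminus u$ rather than re-invoking the cycle machinery. The paper's version has the minor virtue that $P,Q$ are chosen once and for all (independent of $Z$); yours is shorter and sidesteps the case split on where $Z$ begins and ends.
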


\begin{proof} 
Fix a cut vertex $u,$ and a 2 connected block $B$ that contains it. It suffices to show that any closed walk from $u$ can be simulated by a closed walk from $v,$ in the sense that the weight of the walk can be determined using closed walks from $v$.  Let $W = \{u=v_1, v_2, \ldots, v_k=u\}$ be such a closed walk. 

First, we claim that, without loss of generality, this walk has its starting and ending edges either both in $B$, or neither in $B.$ To see this, consider the components of $G\setminus \{u\}$, and suppose that the first edge of the walk is some fixed component.  As $u$ is a cut vertex, the walk must pass through $u$ from that same component to reach a different one. Hence we can write any closed walk from $u$ as the concatenation of closed walks, each of which begin and end in a single component of $G\setminus \{u\}.$ To determine the weight of $W$, we can determine the weights of each closed subwalk inside a component, and add the weights. Hence we can assume the starting and ending edges of $W$ are either both in $B,$ or neither in $B.$ 

Next, fix a path $P = \{v=v_1, \ldots, v_k = u\}$ from $v$ to $u.$ We define a non-backtracking walk $Q$ based on the last edge of $P.$ If $\{v_{k-1}, v_k\} \notin B,$ then we use the cycle $C$ given by the proof of Case 1 of the previous Lemma, and set $Q = P\circ C.$ This makes $Q$ a non-backtracking walk from $v$ to $u$ with last edge \emph{in} $B$. If $\{v_{k-1}, v_k\} \in B,$ we use the walk $W'$ and cycle $C'$ given by the proof of Case 2 of the Lemma, and set $Q = P \circ W' \circ C' \circ \overline{W'}.$ This makes $Q$ a non-backtracking walk from $v$ to $u$ with last edge \emph{not} in $B.$ 

Hence $\{P, Q\}$ are two fixed walks, one with last edge in $B$, and the other with last edge not in $B.$ Without loss of generality, assume $P$ has last edge outside $B.$ Then for any closed walk $W$ from $u$ with first and last edge in $B,$ we see that $P\circ W \circ \overline{P}$ is a closed walk from $v$. We can reveal $P$ by the previous Lemma, so we can also reveal $W.$  If $W$ begins and ends outside $B$, we can use the walk $Q\circ W \circ \overline{Q}$ in the same manner to reveal $W$. 

Hence the weight of any closed walk from $u$ can be found using closed walks from $v$, and the proof is complete.

\end{proof}          
With only slightly more effort, we can obtain the same result of Lemma \ref{lemma:cutpathmeasuring}  for \emph{any} cut vertex. Using this, we'll be able to show that any bridge can be revealed by any vertex.     
\begin{lemma}
\label{lemma:bridgepathmeasuring}
Let $G$ be a connected graph with minimum degree 3,  $v\in V(G),$ and $u$ be a cut vertex of $G$. Then  any non-backtracking walk $W$ from $v$ to $u$ can be revealed by $v$.

\end{lemma}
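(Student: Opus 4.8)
\emph{Proof proposal.}
The plan is to reduce, whenever possible, to Lemma~\ref{lemma:cutpathmeasuring}, and to handle the one genuinely new situation by hand. Split on whether $u$ lies in a maximal $2$-connected block of $G$. If $u\in B$ for some maximal $2$-connected block $B$, then $u$ is a cut vertex of $G$ contained in $B$, so $u$ is a ``cut vertex of $B$'' in the sense of Lemma~\ref{lemma:cutpathmeasuring}, and that lemma applies verbatim to give the statement. So assume instead that every block of $G$ containing $u$ is a bridge; since $G$ has minimum degree $3$, there are at least three bridges incident to $u$. Write $W=\{v=v_1,\ldots,v_k=u\}$; the statement is vacuous when $v=u$, so assume $v\neq u$, and note the last edge $e=\{v_{k-1},u\}$ of $W$ is a bridge. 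Fix two further bridges $e_1=\{u,a_1\}$ and $e_2=\{u,a_2\}$ at $u$, distinct from $e$ and from each other.

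Next I would build, for $i\in\{1,2\}$, an anchor block reachable through $e_i$, exactly as in Case~2 of the proof of Lemma~\ref{lemma:cutpathmeasuring}. In the block tree of $G$, the component obtained by deleting $u$ through the edge to the bridge $\{u,a_i\}$ is a nontrivial tree whose leaves are $2$-connected blocks (a bridge leaf would force a vertex of degree $1$), so it contains a $2$-connected block $B_i$; let $u_i$ be the cut vertex of $B_i$ closest to $u$ in the block tree, and let $W_i$ be a non-backtracking path in $G$ from $u$ to $u_i$ obtained by concatenating non-backtracking paths through the blocks along the tree-path $u\to u_i$, beginning with the edge $e_i$. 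Then $W_i$ has no edge inside $B_i$ (it stops at $u_i$), $W_1$ and $W_2$ meet only at $u$, neither uses $e$, and $u_i\neq u$.

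Now I claim $v$ reveals $W\circ W_1$, $W\circ W_2$, and cycles $C_1\subseteq B_1$, $C_2\subseteq B_2$ through $u_1,u_2$. For each $i$, the concatenation $W\circ W_i$ is non-backtracking (the only new vertex triple, at $u$, is fine since $e\neq e_i$), runs from $v$ to the cut vertex $u_i$ of the $2$-connected block $B_i$, and has last edge not in $B_i$; so Case~1 of Lemma~\ref{lemma:cutpathmeasuring} produces a cycle $C_i\subseteq B_i$ through $u_i$ (with distinct first and last edges) and exhibits $2F(W\circ W_i)$ and $F(C_i)$ as explicit $\mathbb{Z}$-combinations of weights of closed walks from $v$. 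Then set
\[
Z \;=\; W\circ W_1\circ C_1\circ\overline{W_1}\circ W_2\circ C_2\circ\overline{W_2}\circ\overline{W}.
\]
Using Proposition~\ref{concat} repeatedly, $Z$ is a non-backtracking closed walk from $v$: the junctions at each $u_i$ (between $W_i$, $C_i$, $\overline{W_i}$) are fine because the last edge of $W_i$ lies outside $B_i$ while $C_i\subseteq B_i$, and the three junctions at $u$ involve the three pairwise distinct bridges $e,e_1,e_2$. Since $F(Z)=2F(W)+2F(W_1)+2F(W_2)+F(C_1)+F(C_2)$ and $F(W\circ W_i)=F(W)+F(W_i)$, we get
\[
2F(W) \;=\; 2F(W\circ W_1)+2F(W\circ W_2)-F(Z)+F(C_1)+F(C_2),
\]
and every term on the right is an explicit $\mathbb{Z}$-combination of weights of closed walks from $v$ (for $F(Z)$ because $Z\in\mathcal{W}_v$). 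Hence $W$ is revealed by $v$.

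The main obstacle — and the reason a second bridge $e_2$ is needed — is that a single anchor block $B_1$ only lets $v$ reveal $W\circ W_1$ and $C_1$; every closed walk from $v$ built from that data constrains only the sum $F(W)+F(W_1)$, so $F(W)$ cannot be isolated. A second, tree-disjoint anchor $B_2$ reached through a different bridge at $u$ supplies the missing independent relation, and packaging everything into the single closed walk $Z$ above is the crux; the remaining work (that $Z$ is non-backtracking, and the weight identity) is routine bookkeeping.
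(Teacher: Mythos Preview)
Your argument is correct. The core idea---pick two further bridges $e_1,e_2$ at $u$ distinct from the last edge of $W$, and use them to reach $2$-connected blocks---is exactly what the paper does. The packaging differs, however. The paper bundles everything into a single closed non-backtracking walk $C$ at $u$, namely $C=e_1\circ C_{e_1}\circ \overline{e_1}\circ e_2\circ C_{e_2}\circ \overline{e_2}$ (where $C_{e_i}$ is a closed walk from $a_i$ on the far side of the bridge $e_i$), and then reuses the doubling trick of Lemma~\ref{lemma:cutpathmeasuring} verbatim: $2F(W)=2F(W\circ C\circ\overline{W})-F(W\circ C\circ C\circ\overline{W})$, using only two closed walks from $v$. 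You instead invoke Case~1 of Lemma~\ref{lemma:cutpathmeasuring} twice (once per anchor) and then assemble the auxiliary walk $Z$ and a five-term linear identity. Both are valid; the paper's route is a little more economical and mirrors the earlier lemma more transparently, while yours has the virtue of explicitly reducing to the already-proved Case~1 rather than re-running its construction. Your side remark that $W_1$ and $W_2$ meet only at $u$ is true but not actually needed---the non-backtracking check at each junction only looks at consecutive edges.
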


\begin{proof}
If $u$ is in any 2-connected component, Lemma \ref{lemma:cutpathmeasuring} applies, and we have the result. So we can assume that every edge incident to $u$ is a bridge. 

Let $W = \{v=v_1, v_2, \ldots, v_k = u\} $ be our non-backtracking walk. As our graph has minimum degree 3, $u$ must have two neighbors $x$ and $y$ that are distinct from $v_{k-1}$, the second to last vertex in our walk $W$. Let $e = \{u,x\}$ and $f=\{u,y\}$. In the block graph of $G$, removing the vertex corresponding to $u$ splits the block graph into a collection of trees. Since $u$ is a cut vertex, there are two distinct, nontrivial trees $T_e$ and $T_f$ containing the vertices corresponding to $e$ and $f$ respectively. In $T_e$, we can find a path from the vertex corresponding to $e$ to a leaf, which itself corresponds to a 2-connected component. As in the proof of Lemma \ref{lemma:cutpathmeasuring}, we can use this to find a closed walk $C_e$ from $x$ that never crosses $e$. Similarly, we can find $C_f$ from $y.$ Now we define $C = e\circ C_e \circ e \circ f \circ C_f \circ f$ and observe that this is a closed walk from $u$ that starts and ends on two distinct edges, both of which are different from the last edge of $W$. Hence $W\circ C\circ \overline{W}$ and $W\circ C\circ C \circ \overline{W}$ are both closed walks from $v$. Noting that
$$2F(W) = 2F(W\circ C\circ \overline{W}) - F(W\circ C\circ C\circ\overline{W}),$$
our proof is complete.  
\end{proof} 

\begin{cor} \label{cor:bridgeweight}
Let $G$ be a connected graph with minimum degree 3, and $v\in V(G)$. If  $e = \{u, u'\}$ is a bridge in $G$, then $e\in \mathcal{O}_v$. 
\end{cor}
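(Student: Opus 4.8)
The plan is to reduce the claim to Lemma~\ref{lemma:bridgepathmeasuring} by exhibiting the bridge $e = \{u,u'\}$ as the ``last edge'' of a revealed non-backtracking walk from $v$, and then peeling that last edge off. First I would fix a non-backtracking path $W$ from $v$ to $u$ in $G$ (which exists since $G$ is connected), chosen so that its final edge is not the bridge $e$ itself. This requires a small observation: since $\deg(u) \geq 3$, the vertex $u$ has a neighbor other than $u'$, and since $G$ is connected we can route a non-backtracking walk from $v$ into $u$ whose penultimate vertex is that other neighbor; alternatively, if the natural shortest path happens to arrive along $e$, we can detour through a cycle or bridge-tree structure hanging off $u$ (exactly as in the proof of Lemma~\ref{lemma:bridgepathmeasuring}) to change the last edge. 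Call the resulting walk $W$, arriving at $u$ along an edge $\neq e$.

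Next, extend by the bridge: let $W^+ = W \circ \{u, u'\}$, a non-backtracking walk from $v$ to $u'$ (non-backtracking because the last edge of $W$ is not $e$, and $e \neq$ that edge, so Proposition~\ref{concat} applies). Now $u'$ is an endpoint of the bridge $e$; since $\deg(u') \geq 3$, $u'$ is a cut vertex of $G$ (removing $e$ disconnects $u$ from $u'$, so in particular $u'$ lies on no cycle through $e$, but more to the point $u'$ has edges on ``both sides'' in the block-tree sense — or more simply, a bridge endpoint in a graph with min degree $3$ is automatically a cut vertex). By Lemma~\ref{lemma:bridgepathmeasuring} applied to $v$ and the cut vertex $u'$, the walk $W^+$ is revealed by $v$. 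Symmetrically, Lemma~\ref{lemma:bridgepathmeasuring} applied to $v$ and the cut vertex $u$ shows $W$ is revealed by $v$. Then $w_e = F(\{u,u'\}) = F(W^+) - F(W)$, which by definition (or by Proposition~\ref{prop:direct}, since $W$ and $W^+$ are themselves revealed) puts $e \in \mathcal{O}_v$.

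I expect the main obstacle to be the bookkeeping that guarantees a non-backtracking walk from $v$ to $u$ whose last edge is not $e$ \emph{and} simultaneously a non-backtracking walk from $v$ to $u'$ whose last edge is not $e$, compatibly so that one is the extension of the other by $e$. The cleanest route is: take any non-backtracking path $P$ from $v$ to $u$; if its last edge is $e$ (i.e. it arrives via $u'$), first prepend to $u$'s side a short excursion — use that $u$ has two neighbors distinct from $P$'s penultimate vertex and invoke the cycle/bridge-tree construction from Lemma~\ref{lemma:bridgepathmeasuring}'s proof to produce a closed walk $C$ at $u$ avoiding $e$, and replace $P$ by $P \circ C$, which now ends at $u$ along an edge $\neq e$; then append $e$. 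Verifying at each concatenation that the hypothesis of Proposition~\ref{concat} (distinct consecutive edges at the splice point) holds is routine but is the only place care is needed. Everything else is immediate from the two cited lemmas and the linearity $F(W^+) = F(W) + w_e$.
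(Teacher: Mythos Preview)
Your argument is correct and in fact more direct than the paper's. The paper first uses Corollary~\ref{cor:2cutgas} to transfer the problem from $v$ to a cut vertex $v_n$ of some 2-connected block (found via a shortest path from $e$ to such a block), and only then applies Lemma~\ref{lemma:bridgepathmeasuring} from $v_n$ to reveal the two walks $\overline{P}$ and $\overline{P}\circ e$. You bypass Corollary~\ref{cor:2cutgas} entirely by observing that both endpoints $u,u'$ of a bridge are themselves cut vertices of $G$ (since $\delta(G)\ge 3$), so Lemma~\ref{lemma:bridgepathmeasuring} already applies directly from $v$ to each of them. Subtracting the two revealed walk-weights gives $w_e$. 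This is cleaner and uses strictly less machinery.

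One simplification: your detour construction to force the last edge of $W$ to differ from $e$ is unnecessary work. Since $e$ is a bridge, $G\setminus e$ has exactly two components; after relabeling you may assume $v$ lies in the component containing $u$ (the case $v=u$ is then the trivial walk, and $W^+=e$ is revealed directly by Lemma~\ref{lemma:bridgepathmeasuring}). Any path from $v$ to $u$ inside that component avoids $e$ altogether, so its last edge is automatically $\neq e$, and appending $e$ gives a non-backtracking $W^+$ with no further bookkeeping. With that observation, the ``main obstacle'' you anticipated disappears.
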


\begin{proof} Let $e$ be a bridge. We first note that $G$ must contain some 2-connected component, as its block graph is a non trivial tree, and all of the leaves in this tree correspond to 2-connected components. Let $n$ be the number of edges in the shortest path connecting $e$ to a 2-connected component. 

If $n=0$, then one of the vertices of $e$ (without loss of generality, $u$) is a cut vertex of a 2-connected component. By Corollary \ref{cor:2cutgas}, $\mathcal{O}_u\subseteq \mathcal{O}_v$. 
By Lemma \ref{lemma:bridgepathmeasuring}, applied to the walk $\{u, u'\}$ from $u,$ we have that $e\in \mathcal{O}_u$, and hence $e\in \mathcal{O}_v$. 

If $n>0$, let the shortest path from $e$ to a 2-connected component be $P = \{u=v_0, v_1, \ldots, v_n\}.$ Note that $u$ is itself a cut vertex, that $v_1\neq u'$, and that $v_n$ is a cut vertex of a 2-connected component. Also, both $\overline{P}$ and $\overline{P}\circ e$ are non-backtracking walks from $v_n$.   Lemma \ref{lemma:bridgepathmeasuring} says that we can reveal both  $\overline{P}$ and $\overline{P}\circ e$ from vertex $v_n.$ As $w_e = F(\overline{P}\circ e) - F(\overline{P}),$    we see that $e\in \mathcal{O}_{v_n}.$ Corollary \ref{cor:2cutgas} says that $\mathcal{O}_u\subseteq \mathcal{O}_v$, completing the proof.    
\end{proof}
 
 Next we prove a general statement about revealing edges, which will prove useful in the sequel.  

\begin{lemma}\label{lemma:neighbor}
Let $v \in V(G)$, and let $u$ be a neighbor of $v$. If $f = \{u,v\}$ is revealed by $v$, then $O_{\{u ,v\}} = O_v$. 
\end{lemma}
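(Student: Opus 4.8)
The inclusion $O_v \subseteq O_{\{u,v\}}$ is immediate, since $\mathcal{W}_v \subseteq \mathcal{W}_{\{u,v\}}$, so any relation witnessing that an edge is revealed by $v$ also witnesses that it is revealed by $\{u,v\}$. For the reverse inclusion, write $f = \{u,v\}$ and let $P = \{v,u\}$ denote the length-one walk from $v$ to $u$. The whole argument rests on a single claim: \emph{every closed non-backtracking walk $W \in \mathcal{W}_u$ is revealed by $v$.} Granting this, if $e \in O_{\{u,v\}}$ then there is a relation $\sum_i c_i F(W_i) = c_e w_e$ with nonzero integer coefficients and each $W_i \in \mathcal{W}_{\{u,v\}} = \mathcal{W}_u \cup \mathcal{W}_v$; substituting, for each $W_i \in \mathcal{W}_u$, the expression of $F(W_i)$ as a $\mathbb{Z}$-linear combination of weights of walks from $v$ and clearing denominators — exactly the bookkeeping carried out in the proof of Proposition \ref{prop:direct}, with walk weights $F(W_i)$ in place of edge weights — produces a relation exhibiting $e \in O_v$.

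To prove the claim, fix $W = \{u = v_1, v_2, \ldots, v_k = u\} \in \mathcal{W}_u$, which we may take to be non-trivial. The basic move is to sandwich $W$ between $P$ and $\overline{P}$: by Proposition \ref{concat}, $P \circ W \circ \overline{P}$ is a legal closed non-backtracking walk from $v$ exactly when $v \neq v_2$ and $v_{k-1} \neq v$, i.e.\ when neither the first nor the last edge of $W$ is the edge $f$, and in that case $F(W) = F(P \circ W \circ \overline{P}) - 2w_f$, so $W$ is revealed by $v$ since $f \in O_v$ by hypothesis. This is the generic case. The boundary cases — where $f$ sits at an end of $W$, making the sandwich backtrack — are handled by peeling off that copy of $f$. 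Assume the first edge of $W$ is $f$ (the case where only the last edge is $f$ reduces to this by passing to $\overline{W} \in \mathcal{W}_u$, which has the same weight), and write $W = f \circ W'$ with $W' = \{v = v_2, v_3, \ldots, v_k = u\}$ a non-backtracking walk from $v$ to $u$. If $v_{k-1} \neq v$, then $W' \circ \overline{P}$ is already a closed non-backtracking walk from $v$ traversing the same multiset of edges as $W$, so $F(W) = F(W' \circ \overline{P})$ and $W$ is revealed by $v$ outright. If instead $v_{k-1} = v$, peel $f$ off the far end as well, $W' = W'' \circ f$ with $W'' = \{v = v_2, \ldots, v_{k-1} = v\}$ a (necessarily non-trivial) closed non-backtracking walk from $v$, so that $F(W) = F(W'') + 2w_f$ is revealed by $v$. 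In every case $F(W) = F(\widetilde{W}) + \varepsilon w_f$ with $\widetilde{W} \in \mathcal{W}_v$ and $\varepsilon \in \{-2,0,2\}$, which proves the claim and hence the lemma.

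The main obstacle is exactly these boundary cases: the clean sandwich $P \circ W \circ \overline{P}$ fails to be non-backtracking precisely when the first or last edge of the target walk $W$ is the very edge $f$ we wish to exploit, and this forces the case split above. Beyond that, the only points needing vigilance are the repeated invocations of Proposition \ref{concat} to certify non-backtracking at each concatenation seam, and checking that the peeled subwalks $W'$ and $W''$ are genuine walks of the stated type — in particular that $W''$ cannot degenerate to a trivial walk, which follows from $W$ being non-backtracking.
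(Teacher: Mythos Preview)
Your proof is correct and follows essentially the same approach as the paper: the three cases (neither end of $W$ is $f$; exactly one end is $f$; both ends are $f$) and the replacement walks $\widetilde{W}$ are identical to the paper's $W_i'$, yielding $F(W)=F(\widetilde{W})+\varepsilon w_f$ with $\varepsilon\in\{-2,0,2\}$, after which the assumption $f\in O_v$ together with Proposition~\ref{prop:direct} finishes. The only cosmetic difference is that you isolate the claim ``every $W\in\mathcal{W}_u$ is revealed by $v$'' first and then substitute, whereas the paper performs the replacement walk-by-walk inside the revealing relation for $e$.
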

\begin{proof}
One containment is clear: $O_v \subseteq O_{\{u,v\}}$ by definition. Now suppose $e \in O_{\{u,v\}}$. Then there are constants $c_e, c_1, c_2 , \ldots , c_\ell$ and walks $W_1, W_2, \ldots, W_\ell \subseteq W_{\{u,v\}}$ so that $\sum_{i = 1}^\ell c_iF(W_i) = c_e w_e$. We will build a set $\{W_1^\prime, W_2^\prime, \ldots, W_\ell^\prime\} \subseteq \mathcal{W}_v$ that also reveals $e$. 
Note that for $1 \leq i \leq \ell$,  either $W_i$
$\in \mathcal{W}_v$ or $W_i \in \mathcal{W}_u$ by construction. If the walk is in $\mathcal{W}_v$,  set $W_i^\prime = W_i$  Otherwise,  build a new walk $W_i^\prime$. 

Fix such a closed walk from $u$, $W_i = \{u=v_1, v_2, \ldots, v_k=u\}$. We build our walk based on the vertices $v_2$ and $v_{k-1}.$ 

\emph{Case 1:}  $v\notin \{v_2, v_{k-1}\}$.  

 $W_i^\prime = \{v,u\}\circ W_i \circ \{u,v\}$ is a closed non-backtracking walk from $v$ by Proposition \ref{concat}. 
 
\begin{figure}\label{fig:neighbor1}
\begin{tikzpicture}
\draw[thick] (0,2) -- (1,2);
\draw[very thick] (1,2) to [out=0, in=270](2,2.5)to [out=90, in= 90](3,3) to[out=270, in=90](2.5,2) to [out=270, in=90](3,1) to [out=270, in=270] (2,1) to [out=90, in=270] (1,2);
\draw[thick, fill=white] (0,2) circle [radius=.1];
\draw[thick, fill=white] (1,2) circle [radius=.1];
\node [above] at (0,2) {$v$};
\node [above] at (1,2) {$u$};
\node [below] at (.5,2) {$e$};
\node [right] at (2.5,2) {$W_i$};

\end{tikzpicture}
\caption{The first case of Lemma \ref{lemma:neighbor}}
\end{figure}
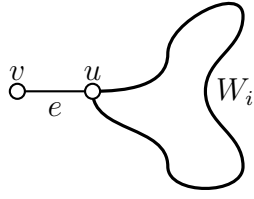
\emph{Case 2:} Exactly one of the vertices in $\{v_2, v_{k-1}\}$ is the vertex $v$.

 Note that by exchanging $W_i$ for $\overline{W}_i$, we may assume $v_2 = v$. Then $W_i = \{ u = v_1, v = v_2, v_3, \ldots, v_{k-1}, v_k = u\}$. In this case, we simply shift the starting point of the walk to $v$. Specifically, we let $W_i^\prime = \{v=v_2, v_3, \ldots, v_{k-1}, v_k = u, v_{k+1}= v\}$. It is clear that the first portion $\{v_2, v_3, \ldots, v_k\}$ constitutes a non-backtracking walk since it is a subwalk of $W_i$. Moreover, since $v_{k-1} \neq v$, and $\{v_k, v_{k+1}\} \in E(G)$, so we can append $v = v_{k+1},$ making $W_i^\prime$ a non-backtracking closed walk. 

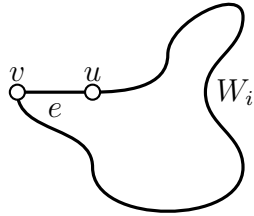
\begin{figure}\label{fig:neighbor2}
\begin{tikzpicture}
\draw[very thick] (0,2) -- (1,2);
\draw[very thick] (1,2) to [out=0, in=270](2,2.5)to [out=90, in= 90](3,3) to[out=270, in=90](2.5,2) to [out=270, in=90](3,1) to [out=270, in=270] (1,1) to [out=90, in=270] (0,2);
\draw[thick, fill=white] (0,2) circle [radius=.1];
\draw[thick, fill=white] (1,2) circle [radius=.1];
\node [above] at (0,2) {$v$};
\node [above] at (1,2) {$u$};
\node [below] at (.5,2) {$e$};
\node [right] at (2.5,2) {$W_i$};

\end{tikzpicture}
\caption{The second case of Lemma \ref{lemma:neighbor}}
\end{figure}
\emph{Case 3:}  $v_2 = v_{k-1} = v$. 

Let $W_i^\prime = \{v_2, \ldots, v_{k-1}\}$ Note that by assumption this walk is closed at $v$, and it is a non-backtracking walk since it is a sub-walk of $W_i$. 

\begin{figure}\label{fig:neighbor3}
\begin{tikzpicture}
\draw[very thick] (0,2) -- (-1,2);
\draw[very thick] (-1,2) to [out=0, in=270](-2,2.5)to [out=90, in= 90](-3,3) to[out=270, in=90](-2.5,2) to [out=270, in=90](-3,1) to [out=270, in=270] (-2,1) to [out=90, in=270] (-1,2);
\draw[thick, fill=white] (0,2) circle [radius=.1];
\draw[thick, fill=white] (-1,2) circle [radius=.1];
\node [above] at (0,2) {$u$};
\node [above] at (-1,2) {$v$};
\node [below] at (-.5,2) {$e$};
\node [left] at (-2.5,2) {$W_i$};
\end{tikzpicture}
\caption{The third case of Lemma \ref{lemma:neighbor}}
\end{figure}
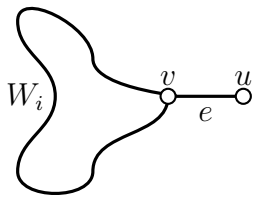

Now, to show that $e$ is revealed by $v$, we first  note that in obtaining $W_i^\prime$ from $W_i$, the only edge ever added or deleted was $f$. Hence we have that $F(W_i^\prime) = F(W_i)+\epsilon_iw_f,$ where the value of $\epsilon_i$ depends on how we obtained $W_i^\prime$ from $W_i$ (in fact, $\epsilon_i\in \{-2,0,2\}$). 

If we reuse the constants $c_e, c_1, \ldots, c_\ell$ used in revealing $e$ as above, we find that
\begin{align*} c_e w_e & = \sum_{i=1}^\ell c_i F(W_i)\\
& =\sum_{i=1}^\ell c_i (F(W_i)+\epsilon_iw_f) -\left(\sum_{i=1}^\ell c_i\epsilon_i\right)w_f \\
&= \sum_{i=1}^\ell c_i F(W_i^\prime) + \epsilon w_f. \end{align*} 

Here $\epsilon = -\sum_{i=1}^\ell c_i\epsilon_i$.  Since we're assuming that $f \in \mathcal{O}_v,$  Proposition \ref{prop:direct} tells us that $e\in \mathcal{O}_v$, completing the proof.     
\end{proof}

\begin{lemma} \label{lemma:blocks}
Let $B$ be a maximal 2-connected block of a graph $G$ with minimum degree 3, and let $v$ be any vertex in $V(B)$. Then $E(B) \subseteq \mathcal{O}_v$. 
\end{lemma}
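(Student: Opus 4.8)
The plan is to show that every vertex $v$ in a $2$-connected block $B$ reveals all of $E(B)$, using the machinery already developed: Lemma \ref{lemma:bridgepathmeasuring} (any non-backtracking walk to a cut vertex is revealed), Corollary \ref{cor:2cutgas} ($O_u \subseteq O_v$ for a cut vertex $u$ of a $2$-connected block), and especially Lemma \ref{lemma:neighbor} (if the edge $\{u,v\}$ is revealed by $v$, then $O_{\{u,v\}} = O_v$). The strategy is a two-step reduction: first reveal a single edge incident to $v$ inside $B$, then ``walk'' that knowledge around the block, using Lemma \ref{lemma:neighbor} at each step to transfer the odometric set from one endpoint to the next, exploiting $2$-connectedness to guarantee enough independent cycles to separate individual edge weights.

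First I would handle the base case of revealing one edge at $v$. Since $B$ is $2$-connected and $G$ has minimum degree $3$, the vertex $v$ has at least two neighbors $x, y$ inside $B$, and (by Menger, as in Lemma \ref{lemma:cutpathmeasuring}) there is a cycle $C$ in $B$ through $v$ using the edges $\{v,x\}$ and $\{v,y\}$; I also want a second cycle $C'$ through $v$ that uses $\{v,x\}$ but \emph{not} $\{v,y\}$ — such a cycle exists because $B - \{v,y\}$ is still connected (deleting one edge from a $2$-connected graph leaves it connected) and contains a path from $y$ back to $v$ avoiding the edge $\{v,y\}$, hence a cycle through $\{v,x\}$... wait, more carefully: I want two cycles through $v$ whose symmetric difference in the ``local'' edges at $v$ isolates a single edge. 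Concretely, take $C$ through $\{v,x\}$ and $\{v,y\}$, and $C''$ a cycle through $v$ avoiding $\{v,x\}$ (exists since $B$ minus that edge is connected and $v$ still has a neighbor there). Then $F(C) + F(C'') $ minus an appropriate combination, together with a third cycle, lets me solve for $w_{\{v,x\}}$; the cleanest route is: the three closed walks $C$, $C''$, and $C \circ C''$ (legal when they don't start/end on the same edge, which we can arrange) give linear relations, and because these cycles use the edge set at $v$ differently, a $\mathbb{Z}$-linear combination isolates $c \cdot w_e$ for some edge $e$ at $v$. So some edge $e = \{v, z\}$ with $z \in V(B)$ is revealed by $v$.

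Then I would iterate: by Lemma \ref{lemma:neighbor}, since $e = \{v,z\}$ is revealed by $v$, we get $O_{\{v,z\}} = O_v$; in particular $O_z \subseteq O_v$. Running the base-case argument at $z$ inside $B$ reveals some edge incident to $z$ by $z$, hence by $v$. Repeating, and using that $B$ is connected, I can propagate: more efficiently, I'd argue by induction on $|V(B)|$ or set up a spanning connected subgraph of $B$ and show that once we reveal one edge at a vertex and one edge at an adjacent vertex, we chain $O$-sets together so that $O_v \supseteq O_w$ for every $w \in V(B)$, and $O_v$ contains at least one edge at every vertex of $B$. Finally, to get \emph{all} edges of $B$: for any edge $e = \{a,b\} \in E(B)$, since $B$ is $2$-connected there is a cycle $Z$ through $e$; I'd take a walk $P$ from $v$ to $a$ inside $B$ (revealed because... hmm, $a$ need not be a cut vertex). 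The real finish: once $O_w = O_v$ for all $w \in V(B)$ and $O_v$ contains an edge at each vertex, I reveal $e = \{a,b\}$ by noting $e$ lies on a cycle $Z$ in $B$; traversing $Z$ as a closed walk based at $a$ gives $F(Z) = \sum_{\text{edges of } Z} w$, and inductively all edges of $Z$ other than $e$ can be assumed revealed (induct on edges of $B$, ordering them so each new one closes a cycle with already-revealed edges — possible since $B$ is $2$-connected, hence has an ``ear decomposition''), so $w_e \in O_a \subseteq O_v$ by Proposition \ref{prop:direct}.

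The main obstacle I expect is the base case — cleanly exhibiting the $\mathbb{Z}$-linear combination of closed non-backtracking walks at $v$ that isolates a single edge weight, while respecting the non-backtracking concatenation constraint of Proposition \ref{concat} (one must be careful that $C \circ C$ and similar products are legal, i.e. the walks don't begin and end with the same edge). The cleanest formulation is probably an ear-decomposition induction on $B$: the initial cycle has all its edges revealed by the ``$2F(W) = 2F(W \circ C \circ \overline W) - F(W \circ C \circ C \circ \overline W)$''-type trick combined with shorter sub-cycles, and each ear, being a path attaching at two points, closes a new cycle with previously revealed edges, so Proposition \ref{prop:direct} peels off its edges one at a time — the technical care is entirely in checking that every closed walk written down is genuinely non-backtracking, and that is where I'd spend the most effort.
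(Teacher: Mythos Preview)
Your overall architecture is right --- reveal something locally at $v$, then propagate through $B$ via Lemma \ref{lemma:neighbor} --- but the base case as written does not work, and the missing idea is exactly what makes the paper's proof short.

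In your base case you take cycles $C$ and $C''$ through $v$ and propose using $C$, $C''$, and $C\circ C''$ as three walks whose weights you combine. But $F(C\circ C'') = F(C) + F(C'')$ identically, so the third walk contributes no new linear relation; you have two equations in all the edge weights along $C\cup C''$, which cannot isolate a single $w_e$. You noticed the discomfort (``wait, more carefully\ldots'') but never produced a combination that actually works. The paper's trick is to place the auxiliary cycle at the \emph{neighbor} $u$, not at $v$: for an edge $e=\{v,u\}$ with $u$ not a cut vertex, pick two neighbors $x,y$ of $u$ in $B$ distinct from $v$, take a path $P$ in $B$ from $x$ to $y$ avoiding $u$, and set $C=\{u,x\}\circ P\circ\{y,u\}$. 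Then $e\circ C\circ e$ and $e\circ C\circ C\circ e$ are closed non-backtracking walks from $v$ (legal because $C$ begins and ends on distinct edges, neither equal to $e$), and
\[
2w_e \;=\; 2F(e\circ C\circ e) - F(e\circ C\circ C\circ e).
\]
If $u$ \emph{is} a cut vertex, Lemma \ref{lemma:cutpathmeasuring} already reveals $e$. So $v$ reveals \emph{every} edge of $B$ incident to $v$, not just one.

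This stronger base case also dissolves your later difficulties. Since all edges at $v$ are revealed, Lemma \ref{lemma:neighbor} gives $\mathcal{O}_u\subseteq\mathcal{O}_v$ for \emph{every} neighbor $u$ of $v$ in $B$; rerunning the base case at each such $u$ reveals all edges at $u$, and iterating across $B$ (connected) finishes in diameter-many steps. There is no need for an ear decomposition, which is fortunate: your ear argument has its own gap, since an ear with $k>1$ internal edges closes only one new cycle, giving one equation in $k$ new unknowns --- Proposition \ref{prop:direct} alone cannot ``peel off its edges one at a time'' from that.
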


\begin{proof} Let $e = \{u,v\}$. We prove that $e\in \mathcal{O}_v$ using two cases.   

Suppose  $u$ is a cut vertex of $G$. Then Lemma \ref{lemma:cutpathmeasuring} applies to $e,$ and so $v$ reveals $e.$ 

Suppose that  $u$ is \emph{not} a cut vertex of $G.$ Because $G$ has minimum degree 3, $u$ must have two neighbors $x$ and $y$ that are distinct from $v.$ As $u$ is not a cut vertex, both $x$ and $y$ are contained in $B.$  By Menger's Theorem, $x$ and $y$ are either adjacent, or there are two internally disjoint paths inside $B$ connecting $x$ to $y$. In either case, there is a path $P$ from $x$ to $y$ avoiding $u.$ Then $C = \{u,x\}\circ P\circ \{y,u\}$ is a closed non-backtracking walk from $u$. As the start and end edges are different, $C\circ C$ is a closed non-backtracking walk from $u$ also. Because neither the start nor end edges are $e$, we can append $e$ to the start and end of each of these walks to obtain closed non-backtracking walks from $v$. Noting that 

$$2w_e = 2F(e\circ C \circ e)- F(e\circ C\circ C\circ e),$$
we see that $v$ reveals the edge $e$.         

Since $e$ was an arbitrary edge incident to $v$, we have that $v$ reveals all of its incident edges. Now we employ Lemma \ref{lemma:neighbor}, which says that for each neighbor $u \in N(v)$, that $\mathcal{O}_u\subseteq \mathcal{O}_v$. Then our proof above applies to the neighbors of $v$, showing that all the edges within the second neighborhood of $v$ are also in $\mathcal{O}_v.$ Continuing the process as many times as needed (the diameter of $B$ times would suffice), we see that $E(B) \subseteq \mathcal{O}_v.$ 
\end{proof} 

Now we are prepared to prove  the main theorem, that any connected graph with minimum degree 3 is odometric.  

\begin{proof} [Proof of Theorem \ref{main}] 
Let $G$ be a graph with minimum degree 3, let $v\in V(G),$ and let $e$ be an edge of $G$. If $e$ is a bridge, Corollary \ref{cor:bridgeweight} says that $e\in\mathcal{O}_v$. If $e$ is not a bridge, it must be in some 2-connected block $B\subseteq G$. If $v\in B$, then Lemma \ref{lemma:blocks} gives that $e\in \mathcal{O}_v$. If $v\notin B$, then since $G$ is connected, there must be a cut vertex $u \in B.$ By  Lemma \ref{lemma:blocks}, we have  $e\in \mathcal{O}_u,$ and by Corollary \ref{cor:2cutgas}, we have $\mathcal{O}_u\subseteq \mathcal{O}_v$.  

As $e$ was arbitrary, we see that $E(G) = \mathcal{O}_v$. As $v$ was arbitrary, we see that $G$ is odometric.

\end{proof}

\section{Minimal Sets of Walks}

If $G$ is odometric, and $v\in V(G)$, the proof of Theorem \ref{main} can easily be turned into an algorithm for \emph{constructing} a collection of closed non-backtracking walks $\mathcal{W}$ from vertex $v$ which reveal every edge of $G$.  Note that the collection of \emph{all} such walks, which we denoted $\mathcal{W}_v,$ is infinite.\footnote{Once a cycle has been found in some walk, it can be repeated  any number times desired, each of which is a different walk. } Hence   $\mathcal{W} \subsetneq \mathcal{W}_v$. The question then arises as to finding a \emph{minimal} (in some sense) set of walks from $v$ that reveal the edges of $G$. If we take minimal to mean the cardinality of $\mathcal{W}$, the answer is a simple exercise in linear algebra.

\begin{theorem}\label{thm:minimal}
If $G$ is odometric, and $v\in V(G)$, then for any minimal collection $\mathcal{W}$ of non-backtracking closed walks  from $v$ that reveals every edge of $G,$ we have $|\mathcal{W}| = |E(G)|.$
\end{theorem}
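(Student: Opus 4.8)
The plan is to set this up as a linear algebra problem over the rationals. First I would observe that each closed non-backtracking walk $W \in \mathcal{W}_v$ has weight $F(W) = \sum_{e \in E(G)} a_e(W)\, w_e$, where $a_e(W) \in \mathbb{Z}_{\geq 0}$ counts how many times $W$ traverses $e$; this associates to $W$ a vector $\mathbf{a}(W) \in \mathbb{Q}^{E(G)}$. A collection $\mathcal{W} = \{W_1, \dots, W_m\}$ reveals a given edge $e$ precisely when the standard basis vector $\mathbf{e}_e$ (up to the nonzero scalar $c_e$, which is irrelevant over $\mathbb{Q}$) lies in the $\mathbb{Q}$-span of $\{\mathbf{a}(W_1), \dots, \mathbf{a}(W_m)\}$ — here one should remark that allowing \emph{integer} coefficients versus \emph{rational} coefficients gives the same notion of "revealed," since clearing denominators and rescaling preserves nonzero-ness of all the coefficients involved (this is essentially the content already used implicitly in Proposition \ref{prop:direct}). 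So $\mathcal{W}$ reveals every edge iff the span of $\{\mathbf{a}(W_i)\}$ contains every $\mathbf{e}_e$, i.e. equals all of $\mathbb{Q}^{E(G)}$, which forces $m = |\mathcal{W}| \geq \dim \mathbb{Q}^{E(G)} = |E(G)|$.

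Next I would argue the reverse inequality, i.e. that a revealing collection of size exactly $|E(G)|$ exists, so that the minimum is attained. Since $G$ is odometric, for each $e \in E(G)$ there is some finite collection of walks from $v$ whose weight-vectors span a subspace containing $\mathbf{e}_e$; taking the union over all $e$ yields a finite collection $\mathcal{W}_0 \subseteq \mathcal{W}_v$ whose weight-vectors span all of $\mathbb{Q}^{E(G)}$. From $\mathcal{W}_0$ we can extract a subset of exactly $|E(G)|$ walks whose weight-vectors form a basis of $\mathbb{Q}^{E(G)}$; this subset still reveals every edge. Hence minimal revealing collections have cardinality exactly $|E(G)|$, and moreover every minimal one has this size (a collection of size $< |E(G)|$ cannot span, and any revealing collection of size $> |E(G)|$ is non-minimal because it contains a proper revealing subcollection — namely any spanning subset of size $|E(G)|$ among its weight-vectors, which exists since the whole set spans).

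The one genuine subtlety — and the step I expect to need the most care — is pinning down exactly what "reveals" means as a linear-algebraic condition and making sure the integer-coefficient definition in the paper matches the rational-span condition. The definition requires the coefficients $c_e, c_1, \dots, c_\ell$ to be \emph{nonzero} integers, and the walks $W_1, \dots, W_\ell$ used in revealing $e$ to be a sub-\emph{multiset} of $\mathcal{W}$ (with the $c_i$'s nonzero). One must check: (i) if $\mathbf{e}_e$ is in the rational span of the weight-vectors of $\mathcal{W}$, then after clearing denominators we get an integer relation $c_e \mathbf{e}_e = \sum_i c_i \mathbf{a}(W_i)$; any $W_i$ whose coefficient came out zero is simply dropped from the list, so the nonzero-coefficient requirement is harmless; and $c_e \neq 0$ can be arranged since $\mathbf{e}_e \neq 0$. (ii) Conversely an integer relation of the required form exhibits $\mathbf{e}_e$ in the rational span. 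Once this dictionary is in place, the cardinality statement is the elementary fact that a spanning set of a $d$-dimensional vector space has size $\geq d$, with equality achievable, which is the "simple exercise in linear algebra" promised in the text. I would also note in passing that because the edge weights $w_e$ are treated as formal/generic (the graph is odometric for \emph{every} weight function, so in particular for algebraically independent weights), there are no accidental linear dependencies among the $w_e$ themselves to worry about — the ambient space really is $|E(G)|$-dimensional.
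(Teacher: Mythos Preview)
Your proposal is correct and is essentially the same linear-algebra argument as the paper's: both identify revealing all edges with the walk-vectors spanning $\mathbb{Q}^{E(G)}$ and minimality with linear independence, so that a minimal revealing family is a basis. The paper packages the spanning direction via a second matrix $N$ (with $MN$ diagonal and nonsingular) rather than stating it abstractly, and handles independence directly from minimality rather than by extracting a basis, but the content is the same; your explicit discussion of why the nonzero-integer-coefficient definition matches the rational-span condition is a welcome clarification that the paper leaves implicit.
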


\begin{proof}
Create a $|E(G)|\times |\mathcal{W}|$ integer matrix $M$ by setting $M(e,W)$ to be the number of times the walk $W$ uses the edge $e$. 

We claim first that the columns of $M$ must be linearly independent over $\mathbb{Q}$. To see this, denote the columns of $M$ by $\{\mathbf{w}_i\}_{i\leq |\mathcal{W}|}$ and suppose that 
$$\mathbf{w} = \sum_{i=1}^ka_k\mathbf{w}_i.$$ If we let $\mathbf{E}$ be the row vector of the edge weights, we see that 
\begin{align*} F(W) & = \langle \mathbf{w}, \mathbf{E}\rangle \\
& =  \sum_{i=1}^ka_k\langle\mathbf{w}_i, \mathbf{E}\rangle \\ 
& = \sum_{i=1}^ka_kF(W_i).
\end{align*} 
Hence any use of $F(W)$ in an integer expression to reveal an edge can be replaced by  $\sum_{i=1}^ka_kF(W_i),$ which (after clearing the denominators of the $a_k$) will be another expression revealing the same edge, without using $F(W)$. This contradicts of the minimality of $\mathcal{W},$ proving that the columns of $M$ are independent. Since the columns are independent, there cannot be more of them than the number of rows. Hence $|\mathcal{W}|\leq |E(G)|.$ 

Next we show that $|\mathcal{W}|\geq |E(G)|$. Create a $|\mathcal{W}|\times|E(G)|$ matrix $N$ by setting $N(W,e)$ to be the coefficient of $F(W)$ in the integer expression that reveals edge $e$. One can easily check that $MN$ is a $|E(G)|\times|E(G)|$ diagonal matrix where $MN(e,e)$ is the (nonzero) coefficient of $F(e)$ in the expression that reveals edge $e$.  Hence the matrix $M$ has full row rank, and so $|\mathcal{W}|\geq |E(G)|.$

\end{proof} 

Theorem \ref{thm:minimal} says that a minimal (cardinality) set of walks $\mathcal{W}$ that reveals the graph contains the same number of walks as there are edges in the graph. On the other hand, revealing any particular edge requires at least two such walks, so there is no obvious bijection between the two sets! 
\section{Conclusion}
The main theorem (Theorem \ref{main}) completely classifies odometric graphs, but there are still many questions that can be asked about graph odometry itself, and its relation to other areas.

  It would be interesting to find applications for an \emph{odometric representation} of a graph $G,$ where an odometric representation is some presentation of a  collection $\mathcal{W}$ that reveals the graph. Can such a representation be used to uncover other information about the structure of the graph? Theorem \ref{thm:minimal} shows that every minimal set of walks revealing the graph has the same size. On the other hand, the actual collection of walks depends crucially on the vertex $v$ chosen as the starting point. Perhaps an odometric representation contains information about what the graph ``looks like'' when viewed from the perspective of vertex $v$. 
  
  Another possible avenue to explore would be to look for odometric sets of walks from $v$ that satisfy some other conditions for minimality. For example, minimize the sum of the weights of the walks in $\mathcal{W}$. Using this notion, one could then minimize over the choice of starting vertices as well, and find an \emph{odometrically minimal} vertex of the graph $G$. In the context of odometry, such a vertex could be considered as the center of the graph. Is such a central vertex unique?  
  
  It should be noted that in defining a non-backtracking \emph{closed} walk, the starting and ending point plays a special role, in that the first and last edges can be the same. If the walk had no particular start, this step would normally be considered a backtracking step. Call such a walk a \emph{strongly} non-backtracking closed walk from $v$.  Call a graph \emph{strongly odometric} if it is odometric at every vertex $v$, but where we restrict the set $\mathcal{W}_v$ to  be the strongly non-backtracking closed walks at $v$. In such a situation, almost all of the techniques used in this paper no longer work. A characterization for such graphs seems to be a much more difficult problem. 

It would also be interesting to explore the possible relationship of graph odometry to statistical mechanics. In 1960, Sherman \cite{Sherman_1960} showed that there is a connection between the Ising model on a planar graph and the collection of non-backtracking walks on the graph. In particular, the generating function for even subgraphs of a finite planar graph $G$ can be expressed as the exponential of a sum of weighted non-backtracking walks in $G$. 

\section{Acknowledgements}
The authors would like to thank Ser-Wei Fu, who first presented the question, and noticed the obvious necessary conditions. We also want to thank Joshua Cooper, who read and edited early drafts of the paper.

\end{document}